\documentclass[10pt]{amsart}
\pdfoutput=1

\usepackage{microtype}
\usepackage[colorlinks,citecolor=black,linkcolor=black,urlcolor=black,pdftitle={The stealthiest particle configurations in 8 and 24 dimensions},pdfauthor={Alex Cohen, Henry Cohn, and Maryna Viazovska}]{hyperref}
\newcommand{\arXiv}[1]{arXiv:\href{https://arxiv.org/abs/#1}{#1}}
\usepackage{doi}

\newtheorem{theorem}{Theorem}[section]
\newtheorem{proposition}[theorem]{Proposition}
\newtheorem{lemma}[theorem]{Lemma}

\theoremstyle{definition}
\newtheorem{definition}[theorem]{Definition}

\numberwithin{equation}{section}

\newcommand{\stealth}{\operatorname{stealth}}
\newcommand{\R}{\mathbb{R}}
\newcommand{\Z}{\mathbb{Z}}
\newcommand{\E}{\mathbb{E}}
\newcommand{\cS}{\mathcal{S}}
\newcommand{\cP}{\mathcal{P}}
\newcommand{\cQ}{\mathcal{Q}}
\newcommand{\cC}{\mathcal{C}}
\newcommand{\cV}{\mathcal{V}}
\newcommand{\supp}{\operatorname{supp}}
\newcommand{\Cov}{\operatorname{Cov}}
\newcommand{\Var}{\operatorname{Var}}
\newcommand{\vol}{\operatorname{vol}}
\newcommand{\density}{\operatorname{density}}

\title[The stealthiest particle configurations in $8$ and $24$ dimensions]{The stealthiest particle configurations\\in $8$ and $24$ dimensions}

\author{Alex Cohen}
\address{Courant Institute, New York University, New York, NY, USA}
\email{alexcohen@nyu.edu}

\author{Henry Cohn}
\address{Department of Mathematics, Massachusetts Institute of Technology, Cambridge, MA, USA}
\email{cohn@mit.edu}

\author{Maryna Viazovska}
\address{Institute of Mathematics, \'Ecole Polytechnique F\'ed\'erale de Lausanne\\
Lausanne, Switzerland}
\email{viazovska@gmail.com}

\begin{document}

\begin{abstract}
We show that the $E_8$ and Leech lattices generate the stealthiest isometry-invariant, locally square-integrable point processes of intensity~$1$ in $\R^8$ and $\R^{24}$, respectively, and that they are the unique stealthiest processes. We also apply the proof technique to sphere packing and energy minimization, and we construct a $20$-dimensional periodic configuration that is stealthier than any known $20$-dimensional lattice with the same particle density. This periodic configuration is a formal dual of Vardy's $20$-dimensional sphere packing.
\end{abstract}

\maketitle

\section{Introduction}

An arrangement of identical classical point particles in Euclidean space is called stealthy if it is transparent to radiation of all sufficiently long wavelengths \cite{FanPercusStillingerStillinger, UcheStillingerTorquato, UcheTorquatoStillinger, BattenStillingerTorquato, TorquatoZhangStillinger, Torquato}: short wavelengths can resolve its structure via scattering, while sufficiently long wavelengths pass without scattering. Every periodic arrangement has some degree of stealthiness, which is measured by the gap between the origin and the nearest Bragg peak in its diffraction pattern, while examples without periodicity include disordered ground states and stacked-slider phases \cite{ZhangStillingerTorquatoI, ZhangStillingerTorquatoII}.

If we fix the particle density (i.e., the number of particles per unit volume in space), then it is natural to ask which particle configuration is stealthiest, in the sense of maximizing the gap in the diffraction pattern, or equivalently minimizing the threshold wavelength beyond which there is no scattering. In this paper, we prove that the $E_8$ root lattice is the stealthiest particle arrangement in $\R^8$ and the Leech lattice is the stealthiest in $\R^{24}$, among a broad class of both crystalline and disordered competitors. In contrast, the conjectured optimizers in two and three dimensions are the triangular lattice and the body-centered cubic lattice \cite{TorquatoZhangStillinger}, but no proof is known. Our proof draws on the sharp auxiliary functions used to solve the sphere packing problem in eight and twenty-four dimensions \cite{Viazovska,CKMRV2017}.

Because the physical motivation for stealthy materials comes from ground states of interacting particle systems, we formulate our theorem in terms of isometry-invariant point processes on $\R^d$, following Ghosh and Lebowitz \cite{GhoshLebowitz} and Bj\"orklund and Byl\'ehn \cite{BjorklundBylehn}. This setting includes the case of periodic configurations, because averaging over isometries yields an isometry-invariant point process.\footnote{If $\cC$ is a periodic configuration, then the lattice $\Lambda$ of translation symmetries of $\cC$ is a subgroup of the group $G$ of isometries of $\R^d$, and $G/\Lambda$ has finite Haar measure. We can therefore obtain an isometry-invariant point process by averaging over $G/\Lambda$.} The point-process perspective is different from the deterministic approach often taken in discrete geometry, and it avoids complications from degenerate configurations. At the same time, we combine it with techniques from discrete geometry, such as linear programming bounds.

We use the point process definitions from the book of Last and Penrose \cite{LastPenrose}. We normalize the intensity of the point processes to be~$1$, and we require them to be locally square-integrable.

Given a point process $\eta$ as above, we will define its stealth radius $\stealth(\eta)$ in Section~\ref{section:background}. Under the Fourier transform convention customarily used in physics, $2\pi \stealth(\eta)$ is the lower edge of the scattering spectrum \cite{BattenStillingerTorquato,TorquatoZhangStillinger,MorseSteinhardtTorquato}. Stealthiness is often quantified using the parameter
\[
\chi = \frac{\vol\mathopen{}\left(B^d_{\stealth(\eta)}(0)\mathclose{}\right)}{2d},
\]
where $B^d_r(x)$ is the open ball of radius $r$ in $\R^d$ centered at $x$ (see equation~(35) in \cite{TorquatoZhangStillinger}). By this measure, $E_8$ achieves $\chi = \pi^4/24 = 4.05871212\dots$ and the Leech lattice achieves $\chi = 1024 \pi^{12}/1403325 = 674.43510368\dots.$ Both values are far above the threshold $\chi = 1/2$ below which counting degrees of freedom suggests that disordered stealthy arrangements are plentiful \cite{TorquatoZhangStillinger}.

Our main theorem gives sharp upper bounds in $\R^8$ and $\R^{24}$:

\begin{theorem}\label{theorem:main}
Let $\eta$ be an isometry-invariant, locally square-integrable point process on $\R^d$ with intensity~$1$. Then
\[
\stealth(\eta) \le \begin{cases}
\sqrt{2} & \text{if $d=8$, and}\\
2 & \text{if $d=24$.}
\end{cases}
\]
If $d=8$, then $\stealth(\eta)=\sqrt{2}$ if and only if $\eta$ is the isometry-invariant $E_8$ process; if $d=24$, then $\stealth(\eta)=2$ if and only if $\eta$ is the isometry-invariant Leech lattice process.
\end{theorem}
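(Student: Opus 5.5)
The plan is to run the Cohn--Elkies linear programming argument ``on the Fourier side,'' using the Viazovska and Cohn--Kumar--Miller--Radchenko--Viazovska magic functions with the roles of a function and its Fourier transform swapped.

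First fix the background. For an isometry-invariant, locally square-integrable process $\eta$ of intensity~$1$, the reduced second moment gives a positive-definite tempered measure $\rho$, namely the pair correlation including the diagonal, i.e.\ $\delta_0 + g(x)\,dx$. Its Fourier transform is the structure factor $S = \widehat{\rho}$, a positive measure with an atom $\delta_0$ at the origin (intensity~$1$). The stealth radius is the largest $r$ such that $S - \delta_0$ vanishes on $B^d_r(0)$. For every Schwartz function $f$ one has the identity
\[
\E\Bigl[\sum_{x\ne y\in\eta\cap W} f(x-y)\Bigr]\Big/\vol(W) \to \int f\, d(\rho-\delta_0),
\]
and Parseval gives $\int f\, d\rho = \int \widehat f\, dS$. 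I would state these as the available facts from Section~\ref{section:background}.

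The key step uses the sphere packing function $g$ in dimension $d\in\{8,24\}$, scaled so that $\widehat g(0)=g(0)>0$. It satisfies $g\le 0$ for $|x|\ge r_d$, with $r_8=\sqrt2$ and $r_{24}=2$, and $\widehat g\ge0$. Here I set $f=\widehat g$, so that $\widehat f = g$ by radial symmetry. Then $f\ge 0$ everywhere, which gives
\[
\int f\,d\rho = f(0) + \int f(x)g_2(x)\,dx \ge f(0),
\]
since the pair correlation is nonnegative. The Fourier side gives
\[
\int g\,dS = g(0) + \int_{|\xi|\ge \stealth(\eta)} g\, d(S-\delta_0).
\]
If $\stealth(\eta)>r_d$, the second term is $\le 0$, so $f(0)\le g(0)$. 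Then I would sharpen: in the magic function normalization $f(0)=\widehat g(0)=g(0)$ exactly, so I need strictness. For this I would use the vanishing of $g$ only on the spheres $|\xi|=\sqrt{2k}$ (resp.\ $\sqrt{2k}$, $k\ge2$) together with $g<0$ strictly between them. If the support of $S-\delta_0$ avoids $B_{r}$ with $r>r_d$, it must lie on the zero spheres, as must the support of $g_2 f$ (where $f=\widehat g$ vanishes only on the Leech/E8 norms). Equality forces both $\rho$ and $S$ to be supported on the radii $\sqrt{2k}$. A second argument applies to $\stealth=r_d$ exactly.

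For equality and uniqueness, $\stealth(\eta)\ge r_d$ forces equality throughout. Thus $\rho-\delta_0$ is supported on $\{|x|^2\in 2\Z_{\ge1}\}$ (or $\ge 4$ for Leech), and so is $S-\delta_0$. A positive-definite measure supported on such spheres whose Fourier transform has the same kind of support should then be identified via the uniqueness of the $E_8$/Leech pair correlation: the radial measures agree with those of the lattice process, by the Cohn--Gonçalves/interpolation (Radchenko--Viazovska-type Fourier interpolation in dimensions $8$ and $24$) determining a radial Schwartz function from its values and derivatives on $\sqrt{2k}$. Finally, I need to pass from the pair correlation to the process itself. Almost every configuration has all distances in $\sqrt{2\Z}$ with minimal distance $r_d$ and density~$1$, so it is an optimal sphere packing with no defects. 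Then the uniqueness of the $E_8$ and Leech packings among periodic packings (Cohn--Kumar), combined with a local rigidity argument (distance set contained in $\sqrt{2\Z}$ forces an even integral lattice structure), identifies $\eta$ as the isometry-averaged lattice process. I expect the main obstacle to be this last step, the rigidity and uniqueness for possibly nonperiodic random configurations, along with the correct handling of the boundary case $\stealth=r_d$.
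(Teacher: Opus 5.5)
There is a genuine gap in the upper bound. At the magic function's native scale, your identity reduces to $\int f\,d\alpha^!_{2,\eta}=\int g\,d(S-\delta_0)$. Since $f(0)=g(0)$, the sign conditions give only $f(0)\le g(0)$, which is vacuous. No strictness can come from $g<0$ between the zero spheres: the identity is consistent with $S-\delta_0$ living on the spheres $|\xi|=\sqrt{2k}$ with $k\ge 2$ (resp.\ $k\ge 3$). So $\stealth(\eta)>r_d$ is never excluded, and the promised ``second argument'' for $\stealth(\eta)=r_d$ is never given (none is needed).

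The missing idea is to dilate the auxiliary function to the hypothetical stealth radius. Suppose $S-\delta_0$ vanishes on $B^d_R(0)$, and set $g_R(\xi)=g(\xi r_d/R)$. Then $g_R\le 0$ for $|\xi|\ge R$, and its Fourier transform $\widehat{g_R}(x)=(R/r_d)^d f(xR/r_d)$ is nonnegative. Your computation now reads $(R/r_d)^d g(0)=\widehat{g_R}(0)\le\int\widehat{g_R}\,d\rho=\int g_R\,dS\le g_R(0)=g(0)$, so $R\le r_d$. The dilation is what breaks the symmetry $f(0)=g(0)$. At $R=r_d$ this is exactly your computation, which correctly forces $\supp\alpha^!_{2,\eta}\subseteq\{f=0\}$. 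Alternatively, since $\stealth(\eta)\ge r_d$ already forces that support condition, a completed rigidity step would also give the bound, because the lattice process has stealth radius exactly $r_d$. You do not draw this conclusion, though.

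For uniqueness, identifying the pair correlation by Fourier interpolation is an unnecessary detour. Only the support of $\alpha^!_{2,\eta}$ is needed, and, as you note, the pair correlation would not determine the process anyway. The real content is the rigidity step you flag as the main obstacle, and your sketch of it has two unjustified points.

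First, intensity $1$ is only an expectation, so ``almost every configuration has density $1$'' needs proof. The paper argues as follows. Almost surely all squared distances lie in $2\Z_{>0}$ (resp.\ $2\Z_{>1}$), so by polarization each sample lies in a coset of an even lattice, whose Gram determinant is a positive integer. Hence $\limsup_n \eta(B^d_n(0))/\vol(B^d_n(0))\le 1$ samplewise. These ratios are uniformly bounded by some constant $C$ because of the minimal distance, and their expectations equal $1$. Fatou's lemma applied to $C$ minus the ratios then forces the $\limsup$ to equal $1$ almost surely, so the lattice is almost surely full rank and unimodular.

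Second, the sample could still be a proper subset of that coset. Stationarity settles this: the coset, built equivariantly from the sample, is itself a stationary process of intensity $1$. The points of the coset missing from $\eta$ therefore form a process of intensity $0$, and so are almost surely absent.

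The classification of even unimodular lattices then finishes the argument: $E_8$ is the only one in dimension $8$, and Leech is the only one in dimension $24$ without norm-$2$ vectors, which are excluded here. Together with uniqueness of the invariant probability measure, this identifies $\eta$. Uniqueness of $E_8$ and Leech among periodic packings is not the right tool, since the samples are not known to be periodic.
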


Uniqueness amounts to a form of crystallization: the only way for a point process to achieve maximal stealthiness in $\R^8$ or $\R^{24}$ is to become an isometric copy of $E_8$ or the Leech lattice, respectively.

Torquato, Zhang, and Stillinger \cite{TorquatoZhangStillinger} interpreted stealthiness as a sphere packing problem in Fourier space, and Morse, Steinhardt, and Torquato \cite{MorseSteinhardtTorquato} argued that the stealthiest configuration must be a lattice.\footnote{We use the terminology common in mathematics, where a lattice is a discrete subgroup of $\R^d$ with rank $d$ and a periodic configuration is the union of finitely many translates of a lattice. In many physics papers, periodic configurations are called lattices and lattices are called Bravais lattices.} That conclusion would imply our theorem, because the stealthiest lattice is the dual of the densest lattice packing of spheres.

However, there is a gap in the argument. It is based on a lemma from \cite{TorquatoZhangStillinger} regarding unions of stealthy configurations. This lemma assumes that all the constituent configurations are stealthy up to a fixed Fourier-space cutoff radius $K$, and it correctly computes the value of $\chi$ associated with that cutoff and the combined particle density. The union may nevertheless be stealthy beyond radius $K$, because its basis form factor can vanish at the shortest vectors of the dual of the underlying lattice. Thus equation~(14) in \cite{MorseSteinhardtTorquato} gives only a lower bound on the value of $\chi$ computed from the union's stealth radius, and their argument does not establish that the stealthiest configuration must be a lattice.

We conjecture that in all sufficiently high dimensions, the stealthiest configuration will not be a lattice. There is little evidence either way, but our conjecture is motivated by the intuition that lattices represent only a narrow and unrepresentative slice of all particle configurations.

In Section~\ref{section:vardy}, we construct a periodic configuration in $\R^{20}$ that is stealthier than any known $20$-dimensional lattice with the same particle density. To the best of our knowledge, this is the first such example in any dimension. It is formally dual to Vardy's $20$-dimensional sphere packing \cite{Vardy}, in the terminology of \cite{CKRS}; in other words, it is as if Vardy's packing were a lattice and therefore had a dual lattice. Vardy's packing is denser than every known lattice packing in $\R^{20}$ (although not every known packing \cite{CHLWT}), and its formal dual is therefore stealthier than any known lattice.

We cannot rule out the possibility that the stealthiest configuration is always a formal dual of the densest sphere packing, but that seems exceedingly unlikely. We see no reason why the densest sphere packing should even have a formal dual, and indeed the densest sphere packing known in $\R^{10}$ provably does not (Proposition~5.4 in \cite{CKRS}).

We will prove Theorem~\ref{theorem:main} in Section~\ref{section:proof}, after recalling some definitions and properties of point processes in Section~\ref{section:background}. Section~\ref{section:extensions} then explains how the same technique addresses uniqueness for related problems such as sphere packing and energy minimization. Finally, Section~\ref{section:vardy} constructs the formal dual of Vardy's packing.

\section{Point processes and pair correlations}
\label{section:background}

Before turning to the proof of Theorem~\ref{theorem:main}, we will review the pair-correlation formalism for stationary point processes. Let $\lambda_d$ denote Lebesgue measure on $\R^d$, and let $\delta_x$ denote a delta function at $x$, which we view both as a Dirac measure and as a tempered distribution. We use the normalization
\[
\widehat{f}(\xi) = \int_{\R^d} f(x) e^{-2\pi i \langle x, \xi \rangle} \, dx
\]
for the Fourier transform, where $\langle \cdot,\cdot \rangle$ denotes the usual inner product on $\R^d$, and we extend the Fourier transform in the usual way to tempered distributions via duality, so that $\widehat{\delta}_0 = 1$ and $\widehat{1} = \delta_0$.

We regard a point process $\eta$ as a random counting measure, with $\eta(B)$ being the number of particles located in $B$. We allow multiple particles to occupy the same location, and references to particles will always take these multiplicities into account. For example, when we count ordered pairs of particles below, we mean ordered pairs of particle labels, rather than merely ordered pairs of points at which particles are located.

We say a point process $\eta$ on $\R^d$ is \emph{stationary} if its law is invariant under translations, \emph{isometry invariant} if it is invariant under the full Euclidean isometry group, and \emph{locally square-integrable} if $\E[\eta(B)^2] < \infty$ for every bounded Borel set $B$ in $\R^d$. In principle we do not require general point processes to be locally finite (following \cite{LastPenrose}), but every locally square-integrable point process is locally finite, so we will not need to state this hypothesis.

Every locally finite point process $\eta$ on $\R^d$ is \emph{proper} by Corollary~6.5 in \cite{LastPenrose}, which means there are random elements $X_1,X_2,\dotsc \in \R^d$ and a random variable $k \in \Z_{\ge 0} \cup \{\infty\}$ such that
\[
\eta = \sum_{n=1}^k \delta_{X_n}.
\]
This representation allows multiplicities, because $X_1,X_2,\dots$ need not be distinct.

To understand pair correlations, we will need to analyze pairs of particles in samples from $\eta$. The \emph{second factorial measure} $\eta^{(2)}$ of $\eta$ counts ordered pairs of distinct particles (i.e., $\eta^{(2)}(B)$ is the number of ordered pairs of distinct particles in $B \subseteq \R^d \times \R^d$), and the \emph{second factorial moment measure} $\alpha_{2,\eta}$ is its expectation $\E[\eta^{(2)}]$. When $\eta$ is stationary, it has a \emph{reduced second factorial moment measure} $\alpha^!_{2,\eta}$. This measure is locally finite if and only if $\eta$ is locally square-integrable, in which case it is characterized by
\begin{equation} \label{eq:reducedfactorialmoment}
\int f(x,y) \, d\alpha_{2,\eta}(x,y) = \iint f(x,x+y) \, d\alpha^!_{2,\eta}(y) \, dx
\end{equation}
for nonnegative Borel-measurable functions $f$ on $\R^d \times \R^d$ (Proposition~8.7 in \cite{LastPenrose}). In other words, $\alpha^!_{2,\eta}$ is the intensity measure of relative displacements of ordered pairs of distinct particles.

We henceforth assume that $\eta$ is stationary and locally square-integrable. Then $\alpha^!_{2,\eta}$ is translation bounded: for every compact set $K$ in $\R^d$,
\[
\sup_{t \in \R^d} \alpha^!_{2,\eta}(K + t) < \infty.
\]
Indeed, \eqref{eq:reducedfactorialmoment} implies that
\[
\alpha^!_{2,\eta}(K + t) \le \alpha_{2,\eta}([0,1]^d \times ([0,1]^d + K + t)).
\]
We have
\[\eta^{(2)}([0,1]^d \times ([0,1]^d + K + t)) \le \eta([0,1]^d) \eta([0,1]^d + K + t),
\]
and therefore
\begin{align*}
\alpha^!_{2,\eta}(K + t) &\le \E[\eta([0,1]^d) \eta([0,1]^d + K + t)]\\
&\le \sqrt{\E[\eta([0,1]^d)^2]} \sqrt{\E[\eta([0,1]^d + K + t)^2]}
\end{align*}
by the Cauchy--Schwarz inequality. This upper bound is independent of $t$ by stationarity and finite by local square-integrability, which proves translation boundedness. As a consequence of translation boundedness, $\alpha^!_{2,\eta}$ defines a tempered distribution, and $\alpha^!_{2,\eta}(B^d_r(0)) = O(r^d)$ as $r \to \infty$.

Suppose now that $\eta$ is not only stationary and locally square-integrable, but also has \emph{intensity}~$1$ (i.e., $\E[\eta(B)] = \lambda_d(B)$ for all bounded Borel sets $B$ in $\R^d$). Then the signed Radon measure
\[
\kappa_\eta = \delta_0 + \alpha^!_{2,\eta} - \lambda_d
\]
is called the \emph{reduced covariance measure} of $\eta$. It is an even measure (i.e., $\kappa_\eta(-B) = \kappa_\eta(B)$) and defines a tempered distribution. We can also identify the $\lambda_d$ term with the tempered distribution $1$.

For a Schwartz function $\varphi \in \cS(\R^d)$, write
\[
\eta(\varphi) = \int \varphi(x) \, d\eta(x)
\quad\text{and}\quad
\widetilde\varphi(x) = \overline{\varphi(-x)}.
\]
Then $\E[|\eta(\varphi)|^2] < \infty$ for every $\varphi \in \cS(\R^d)$: we can tile space by cubes, bound $|\varphi|$ by its maximum on each cube, and use the local square-integrability and stationarity of $\eta$ together with Minkowski's inequality.

For all $\varphi,\psi \in \cS(\R^d)$, the reduced covariance measure satisfies
\begin{equation}
\label{eq:covariance}
\begin{split}
\Cov[\eta(\varphi), \eta(\psi)] &:=
\E\left[\left(\eta(\varphi)-\int\varphi(x) \, dx\right)
\overline{\left(\eta(\psi)-\int\psi(x) \, dx\right)}\right]\\
&= \left\langle\kappa_\eta,\varphi*\widetilde{\psi}\right\rangle.
\end{split}
\end{equation}
In particular, $\kappa_\eta$ is positive definite, because
\[
\left\langle \kappa_\eta, \varphi * \widetilde{\varphi} \right\rangle = \Var[ \eta(\varphi)] \ge 0.
\]
For completeness, we will verify the identity \eqref{eq:covariance}. We begin with
\begin{align*}
\E\left[\eta(\varphi) \overline{\eta(\psi)}\right] &= \iint \varphi(x) \overline{\psi(x+y)} \, dx \, d\alpha^!_{2,\eta}(y)
+
\int \varphi(x) \overline{\psi(x)} \, dx\\
&= \int \left(\varphi * \widetilde{\psi}\,\right)(-y) \, d\alpha^!_{2,\eta}(y) 
+
\int \varphi(x) \overline{\psi(x)} \, dx,
\end{align*}
which follows from Proposition~8.7 in \cite{LastPenrose} (see the paragraph after the proof of the proposition). Combining this equation with
\[
\int \left(\varphi * \widetilde{\psi}\,\right)(-y) \, dy = \left(\int \varphi(x) \, dx\right) \overline{\left(\int \psi(x) \, dx\right)}, 
\]
\[
\left(\varphi * \widetilde{\psi}\,\right)(0) = \int \varphi(x) \overline{\psi(x)} \, dx, 
\]
and the symmetry of $\kappa_\eta$ under $x \mapsto -x$ completes the proof.

By the Bochner--Schwartz theorem, the Fourier transform of a positive-definite tempered distribution is a positive tempered measure. We define the \emph{structure factor} $S_\eta$ of $\eta$ to be the Fourier transform $\widehat{\kappa}_\eta$ of the reduced covariance measure. Because $\kappa_\eta$ is even, $S_\eta$ is even as well, and Fourier inversion gives $\kappa_\eta = \widehat{S}_\eta$. The structure factor is also known as the Bartlett spectral measure (see, for example, Section~8.2 of \cite{DaleyVereJones}).

In terms of $S_\eta$,
\[
\Cov[\eta(\varphi),\eta(\psi)] = \int \widehat{\varphi}(\xi) \overline{\widehat{\psi}(\xi)} \, dS_\eta(\xi).
\]
As a consequence, if $S_\eta(\supp \widehat{\varphi}) = 0$, then $\eta(\varphi)$ has variance $0$ and is therefore almost surely equal to its expected value $\int \varphi(x) \, dx$.

When $\alpha^!_{2,\eta}$ is absolutely continuous with respect to $\lambda_d$ and $\eta$ has intensity~$1$, physics notation writes $\alpha^!_{2,\eta} = g_2 \lambda_d$ and calls $g_2$ the \emph{pair correlation function}. More generally, we use $g_2$ to denote the tempered distribution defined by $\alpha^!_{2,\eta}$. With this convention, the structure factor $S_\eta = 1+\widehat{g_2-1}$ describes the scattering intensity for incident radiation. We can therefore use it to define the stealth radius:

\begin{definition}
Let $\eta$ be a stationary, locally square-integrable point process on $\R^d$ with intensity~$1$. The \emph{stealth radius} of $\eta$ is
\[
\stealth(\eta) = \sup \,\{r \ge 0 : S_\eta(B^d_r(0))=0\},
\]
where $B^d_r(0)$ is the open ball of radius $r$ centered at the origin in $\R^d$.
\end{definition}

The stealth radius is always finite: otherwise $S_\eta=0$ and therefore $\kappa_\eta=0$, which would contradict $\kappa_\eta(\{0\}) = 1 + \alpha^!_{2,\eta}(\{0\}) \ge 1$. Note also that if $r = \stealth(\eta)$, then $S_\eta(B^d_r(0))=0$ follows from $B^d_r(0) = \bigcup_{s<r} B^d_s(0)$ using continuity from below.

Given a lattice $\Lambda$ in $\R^d$, we can obtain a stationary point process $\eta$ by averaging over translation by vectors modulo $\Lambda$. If $\Lambda$ has particle density~$1$, then
\[
\alpha^!_{2,\eta} = \sum_{x\in\Lambda\setminus\{0\}}\delta_x,
\]
and
\[
S_\eta = \sum_{\xi\in\Lambda^*\setminus\{0\}}\delta_\xi,
\]
where
\[
\Lambda^* = \{ \xi \in \R^d : \text{$\langle \xi,x \rangle \in \Z$ for all $x \in \Lambda$} \}
\]
is the dual lattice; the first identity follows from the definition, and the second from Poisson summation. The stealth radius of $\eta$ is therefore the shortest nonzero vector length of $\Lambda^*$.

More generally, if a periodic configuration $\cC$ with particle density~$1$ is the union of pairwise disjoint translates $\Lambda+v_j$ of $\Lambda$ with $1 \le j \le n$, then the corresponding stationary process $\eta$ satisfies
\[
\alpha^!_{2,\eta} = \frac{1}{n} \sum_{j,k=1}^n \sum_{\substack{x \in \Lambda\\ x+v_j-v_k \ne 0}}\delta_{x+v_j-v_k}
\]
and
\[
S_\eta = \sum_{\xi\in\Lambda^*\setminus\{0\}}
\left|\frac{1}{n}\sum_{j=1}^n e^{2\pi i \langle v_j, \xi \rangle}\right|^2\delta_\xi.
\]
The stealth radius is the smallest $|\xi|$ with $\xi \in \Lambda^*\setminus\{0\}$ and
\[
\sum_{j=1}^n e^{2\pi i \langle v_j, \xi \rangle} \ne 0.
\]

The \emph{isometry-invariant $\cC$ process} is obtained from the stationary process by averaging over the action of $O(d)$. It has the same stealth radius as the stationary process. For example, the isometry-invariant $E_8$ and Leech lattice processes have stealth radii $\sqrt{2}$ and $2$, respectively. 

\section{Proof of the main theorem}
\label{section:proof}

The proof of our upper bound for the stealth radius is obtained by interchanging the roles of physical space and Fourier space in the Cohn--Elkies linear programming bound for the sphere packing density \cite{CohnElkies}. Given an auxiliary function satisfying certain inequalities, we obtain a bound:

\begin{theorem} \label{theorem:LP}
Let $f \in \cS(\R^d)$ be real-valued and even, and let $r>0$. If $f(0) = \widehat{f}(0) = 1$, $f(x) \le 0$ whenever $|x| \ge r$, and $\widehat{f}(\xi) \ge 0$ for all $\xi$, then every stationary, locally square-integrable point process $\eta$ on $\R^d$ with intensity~$1$ satisfies
\[
\stealth(\eta) \le r.
\]
Moreover, $\stealth(\eta)=r$ if and only if all three of the following conditions hold: $S_\eta(B^d_r(0))=0$, $\supp \alpha^!_{2,\eta} \subseteq \{ x \in \R^d: \widehat{f}(x)=0\}$, and $\supp S_\eta \subseteq \{ \xi \in \R^d: f(\xi)=0\}$.
\end{theorem}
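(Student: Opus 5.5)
The plan is to evaluate the tempered distribution $\kappa_\eta=\delta_0+\alpha^!_{2,\eta}-1$ against the auxiliary function $\widehat{f}$ in two ways, once in physical space and once in Fourier space. This swaps the roles of $f$ and $\widehat f$ relative to the Cohn--Elkies bound. Since $\kappa_\eta$ and $f$ are even and $\widehat{S}_\eta=\kappa_\eta$, we have
\[
\langle \kappa_\eta,\widehat{f}\rangle=\langle S_\eta, f\rangle .
\]
The pairing makes sense because $\alpha^!_{2,\eta}$ is translation bounded and $S_\eta$ is a positive tempered measure, so both sides are absolutely convergent integrals against Schwartz functions.

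On the physical side,
\[
\langle \kappa_\eta,\widehat f\rangle=\widehat f(0)+\int \widehat f\,d\alpha^!_{2,\eta}-f(0)=\int \widehat f\,d\alpha^!_{2,\eta}\ge 0,
\]
using $\int\widehat f=f(0)=1=\widehat f(0)$ and $\widehat f\ge 0$.

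On the Fourier side, suppose for contradiction that $\stealth(\eta)>r$. Then $S_\eta(B^d_{r+\epsilon}(0))=0$ for some $\epsilon>0$, so $S_\eta$ is supported in $\{|\xi|\ge r+\epsilon\}$, where $f\le0$. Hence $\langle S_\eta,f\rangle\le 0$, and so both sides vanish. This forces $\int f\,dS_\eta=0$ with $f\le 0$ on the support, so $f=0$ $S_\eta$-almost everywhere on $\{|\xi|\ge r+\epsilon\}$.

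To reach a contradiction, I would strengthen the argument with a perturbation that exploits the slack beyond radius $r$. The cleanest route is to replace $f$ by a rescaled $f_t(x)=f(x/t)$ with $t$ slightly greater than $1$, chosen so that $tr<r+\epsilon$. Then $f_t\le 0$ outside radius $tr$, and $\widehat{f_t}=t^d\widehat f(t\,\cdot)\ge0$. After renormalizing to $g=f_t/t^d$, so that $\widehat g(0)=1$, we get $g(0)=t^{-d}<1$. Running the same identity with $g$ gives
\[
\int \widehat g\,d\alpha^!_{2,\eta} + \bigl(\widehat g(0)-g(0)\bigr)=\langle S_\eta,g\rangle\le 0 .
\]
The left side is at least $1-t^{-d}>0$, which is the desired contradiction. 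Therefore $\stealth(\eta)\le r$. In fact this scaling trick should be the main argument: the bound follows from it directly, and it makes clear where strictness enters.

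For the equality case, if $\stealth(\eta)=r$ then $S_\eta(B^d_r(0))=0$ by the continuity remark after the definition. Hence $S_\eta$ lives on $|\xi|\ge r$, where $f\le0$, and the identity
\[
\int\widehat f\,d\alpha^!_{2,\eta}=\int f\,dS_\eta
\]
has a nonnegative left side and a nonpositive right side. Both must vanish. Because $\widehat f\ge0$ is continuous and $\alpha^!_{2,\eta}$ is positive, $\widehat f=0$ on $\supp\alpha^!_{2,\eta}$. Similarly, $f\le 0$ is continuous on the closed region containing $\supp S_\eta$, so $f=0$ on $\supp S_\eta$.

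Conversely, suppose the three conditions hold. Then $S_\eta(B^d_r(0))=0$ gives $\stealth(\eta)\ge r$, and the first part gives $\le r$. The support conditions are in fact automatically consistent with this, so equality follows from the first condition together with the bound.

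The only delicate point I anticipate is justifying the distributional pairings: $\langle\kappa_\eta,\widehat f\rangle$ should equal the literal integrals, and $\int\widehat f\,d\lambda_d=f(0)$. Both follow from temperedness and translation boundedness, since the measures have polynomial growth, $O(r^d)$, while $f$ and $\widehat f$ are Schwartz.
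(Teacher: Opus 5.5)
Your proposal is correct and is essentially the paper's proof. The paper takes any $R$ with $S_\eta(B^d_R(0))=0$ and pairs $\kappa_\eta$ with $g(x)=R^d\widehat f(xR/r)$, which is your rescaling with $t=R/r$ up to the factor $R^d$, and reads off $0\ge R^d-r^d$ directly rather than by contradiction; its equality case, obtained by setting $R=r$, matches your support argument.
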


In the equality conditions, the use of $\widehat{f}$ to describe $\supp \alpha^!_{2,\eta}$ and $f$ to describe $\supp S_\eta$ is intentional, and is the reverse of the relationship in Section~5 of \cite{CohnElkies}.

\begin{proof}
Let $\eta$ be such a point process, and suppose $S_\eta(B^d_R(0))=0$ with $R>0$. We begin by defining $g$ by $g(x) = R^d \, \widehat{f}(xR/r)$, or equivalently $\widehat{g}(\xi) = r^d f(\xi r/R)$ because $f$ is even. This rescaling ensures that $g(x) \ge 0$ for all $x$ and $\widehat{g}(\xi) \le 0$ for $|\xi| \ge R$. Then
\begin{align*}
\int \widehat{g}(\xi) \, dS_\eta(\xi) &= \int g(x) \, d\kappa_\eta(x)\\
&= g(0) + \int g(x) \, d\alpha^!_{2,\eta}(x) - \int g(x) \, dx\\
&\ge g(0) + 0 - \widehat{g}(0)\\
&= R^d - r^d.
\end{align*}
Furthermore, $\widehat{g}$ is nonpositive on $\supp S_\eta$, and so we conclude that $0 \ge R^d - r^d$ and therefore $r \ge R$. Taking the supremum over all such $R$ yields $\stealth(\eta) \le r$.

Setting $R=r$ shows that equality holds if and only if $S_\eta(B^d_r(0))=0$ and
\[
 \int g(x) \, d\alpha^!_{2,\eta}(x) = \int \widehat{g}(\xi) \, dS_\eta(\xi) = 0.
\]
Because $g = r^d \widehat{f}$ when $r=R$, the two vanishing integral conditions are equivalent to $\supp \alpha^!_{2,\eta} \subseteq \{ x \in \R^d: \widehat{f}(x)=0\}$ and $\supp S_\eta \subseteq \{ \xi \in \R^d: f(\xi)=0\}$.
\end{proof}

The inequality constraints in Theorem~\ref{theorem:LP} are identical to those in Theorem~3.2 of \cite{CohnElkies}, and thus linear programming bounds for sphere packing yield bounds for the stealth radius as well. For example, we obtain an asymptotic bound using the auxiliary functions constructed by OpenAI \cite[Theorem~4.1 of Chapter~1]{OpenAI}:

\begin{proposition}
As $d \to \infty$, every stationary, locally square-integrable point process of intensity~$1$ in $\R^d$ has stealth radius at most $(1/\pi+o(1))\sqrt{d}$.
\end{proposition}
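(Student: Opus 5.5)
The plan is to apply Theorem~\ref{theorem:LP} directly, using the asymptotic auxiliary functions from \cite[Theorem~4.1 of Chapter~1]{OpenAI}. Theorem~\ref{theorem:LP} has the same hypotheses as the Cohn--Elkies linear programming bound (Theorem~3.2 in \cite{CohnElkies}), so any Cohn--Elkies auxiliary function is also an admissible $f$ here. The quantity to control is the radius $r$ at which $f$ becomes nonpositive. In the packing problem, such an $f$ with $f(0)=\widehat f(0)=1$ gives the density bound $\vol(B^d_{r/2})$. In our setting the same $f$ gives $\stealth(\eta)\le r$ directly.

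The first step is to extract from the cited result the precise statement we need. For each sufficiently large $d$ we want a real, even Schwartz function $f_d$ with $f_d(0)=\widehat{f_d}(0)=1$, $\widehat{f_d}\ge 0$, and $f_d(x)\le 0$ for $|x|\ge r_d$, where $r_d=(1/\pi+o(1))\sqrt d$. If the reference is phrased as a density bound $\Delta_d\le \vol(B^d_{r_d/2})\cdot(\text{normalization})$, I will convert it back to a radius. The only freedom in the conversion is the scaling $f(x)\mapsto f(\lambda x)$, which multiplies $\widehat f(0)$ by $\lambda^{-d}$, and one scalar multiple, which together fix $f(0)=\widehat f(0)=1$. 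For calibration, the bound $(1/\pi+o(1))\sqrt d$ is, up to lower-order factors, the radius of a ball of volume $2^{d(1-\epsilon)}$ or thereabouts. This is because $\vol(B^d_\rho)=1$ at $\rho\sim\sqrt{d/(2\pi e)}$, and $1/\pi$ sits between that value and the Kabatiansky--Levenshtein-type exponents. So I will check carefully that the stated constant matches the normalization in \cite{OpenAI}.

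A technical point is regularity. Theorem~\ref{theorem:LP} requires $f\in\cS(\R^d)$, whereas the constructed functions might only be integrable and continuous, with integrable Fourier transform. If so, I will rerun the proof of Theorem~\ref{theorem:LP} under that weaker hypothesis. Alternatively, I will mollify: multiply $f$ by a Gaussian $e^{-\epsilon\pi|x|^2}$, which keeps $\widehat f\ge0$ since the new transform is a convolution of nonnegative functions, and preserves $f\le 0$ outside $r$. Then I renormalize and let $\epsilon\to0$, which costs only a factor $1+o(1)$ in $r$. After regularity is settled, applying Theorem~\ref{theorem:LP} gives $\stealth(\eta)\le r_d$ for every stationary, locally square-integrable, intensity-$1$ process, which is the claim.

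I expect the main obstacle to be this bookkeeping: matching the normalization and regularity of the functions in \cite{OpenAI} to the hypotheses of Theorem~\ref{theorem:LP}. The inequality itself follows immediately once the auxiliary function is in hand.
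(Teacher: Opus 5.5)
Your proposal is correct and is essentially the paper's argument. Theorem~\ref{theorem:LP} has exactly the Cohn--Elkies inequality constraints, so the auxiliary functions of \cite[Theorem~4.1 of Chapter~1]{OpenAI}, normalized to $f(0)=\widehat{f}(0)=1$ with sign change at radius $(1/\pi+o(1))\sqrt{d}$, give the bound immediately; the paper does no further bookkeeping. One small correction if you pursue the regularity issue: multiplying by a Gaussian improves the decay of $f$ but not its smoothness, so to land in $\cS(\R^d)$ you should also convolve with $\psi=\varphi*\widetilde{\varphi}$ for some even $\varphi\ge 0$ supported in $B^d_{\epsilon/2}(0)$, which preserves $\widehat{f}\ge 0$ and enlarges the sign-change radius by at most $\epsilon$.
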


In addition to Theorem~\ref{theorem:LP}, we will need the following characterization of the $E_8$ and Leech lattice processes.

\begin{lemma} \label{lemma:rigidity}
Let $d \in \{8,24\}$, let $\Lambda_d$ be $E_8$ or the Leech lattice, respectively, and let 
\[
D = \begin{cases} 2 \Z_{>0} & \text{if $d=8$, and}\\
2 \Z_{>1} & \text{if $d=24$.}
\end{cases}
\]
If $\eta$ is an isometry-invariant, locally square-integrable point process on $\R^d$ with intensity~$1$ such that
\[
\supp \alpha^!_{2,\eta} \subseteq \{x \in \R^d : |x|^2 \in D\},
\]
then $\eta$ is the isometry-invariant $\Lambda_d$ process.
\end{lemma}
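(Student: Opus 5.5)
Since $\supp \alpha^!_{2,\eta}$ avoids the open ball of radius $\sqrt{2}$ (resp.\ $2$), we first pass from the process to its almost-sure configurations. The pairwise distances in a sample can be read off from $\eta^{(2)}$. By \eqref{eq:reducedfactorialmoment} with $f(x,y)=1_{B}(x)1_{U}(y-x)$, where $U$ is the open complement of $\{x: |x|^2\in D\}$, we get $\E[\eta^{(2)}(\{(x,y): x\in B,\ y-x\in U\})]=\lambda_d(B)\,\alpha^!_{2,\eta}(U)=0$. Taking $B$ to run over a countable exhaustion of $\R^d$ shows that almost surely every difference of two distinct particles (with multiplicity) satisfies $|X_m-X_n|^2\in D$. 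In particular, almost surely there are no multiple points, and the configuration $\cC$ is a packing with minimal distance at least $\sqrt{2}$ (resp.\ $2$) whose squared distances are all even integers (resp.\ even integers $\ge 4$).

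Next we show that such a configuration is, after translation, a subset of an even lattice with the right minimal norm. Fix a point $x_0\in\cC$ and put $\cC'=\cC-x_0$. For $u,v\in\cC'$ the polarization identity $2\langle u,v\rangle=|u|^2+|v|^2-|u-v|^2\in 2\Z$ gives $\langle u,v\rangle\in\Z$. Hence the $\Z$-span $L$ of $\cC'$ is an integral lattice (a finitely generated subgroup of $\R^d$ on which the Gram form is integral, so it is discrete), generated by vectors of even norm, and therefore even. Every nonzero vector of $L$ has norm at least $2$. For $d=24$ we must also exclude norm-$2$ vectors in $L$ (not merely differences of points in $\cC$), and this is one delicate point; we return to it below.

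Now bring in the intensity. Since $\cC'$ is a subset of $L$ and $\eta$ has intensity $1$, the ergodic/mass-transport density of $\cC$ is $1$, and averaging stationarity shows that almost surely $\cC$ has density $1$ along balls (via the ergodic decomposition, or more simply by computing $\E[\eta(B_R)]$ together with the upper bound $\#(\cC'\cap B_R)\le\#(L\cap B_{R})\approx \vol(B_R)/\operatorname{covol}(L)$). A density argument then forces $\operatorname{covol}(L)\le 1$ on a positive-probability event. An even lattice has integer determinant, so $\operatorname{covol}(L)\ge1$, and hence $L$ is unimodular and $\cC'=L$ up to a density-zero set. But any missing point of $L$ would leave density unaffected, so we need a better argument here. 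We use the expectation identity exactly: since $\cC'\subseteq L$, we have $\eta(B)\le\#((L+x_0)\cap B)$ pointwise, and both sides have expectation $\lambda_d(B)$ when $L$ is unimodular and $x_0$ is uniformly distributed, giving almost-sure equality $\cC=L+x_0$.

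Finally we apply the classification results. An even unimodular lattice in dimension $8$ is $E_8$. In dimension $24$, an even unimodular lattice with no roots is the Leech lattice, which settles the problem provided $L$ has no norm-$2$ vectors. This is the main obstacle. Our plan is to show first that $L$ is unimodular (as above, using only minimal norm $\ge 2$), so that $\cC$ is a full coset of $L$. Then $\cC'=L$ itself, so all differences are lattice vectors, and the support hypothesis excludes norm $2$. The invariance then identifies $\eta$ as the isometry-invariant $\Lambda_d$ process, since the law is determined by the random isometry class and isometry invariance forces the uniform average. The hardest step is making the density comparison rigorous for a random, not necessarily ergodic, process; we would handle it by conditioning on the translation-invariant $\sigma$-field (ergodic decomposition), using that each ergodic component has intensity $1$ after applying the lattice bound.
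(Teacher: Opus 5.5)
Your proposal is correct and follows essentially the same route as the paper. The steps match: the reduced-moment identity gives the almost-sure distance constraint, polarization makes the difference lattice even, and a density comparison forces it to be unimodular. Pointwise domination of $\eta$ by the full coset, together with equal intensities, then forces equality, and the classification of even unimodular lattices plus uniqueness of the invariant measure finish the argument.

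The only real difference is the density step, where the paper avoids the ergodic decomposition. It uses the uniform packing bound on $\eta(B^d_n(0))/\vol(B^d_n(0))$, coming from minimal distance $\sqrt{2}$, with reverse Fatou to get density $1$ almost surely rather than merely on a positive-probability event. It then applies dominated convergence to the stationary coset process, which is the rigorous form of your ``$x_0$ uniformly distributed'' claim.
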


\begin{proof}
Applying \eqref{eq:reducedfactorialmoment} to
\[
(x,y) \mapsto \begin{cases}
1 & \text{if $|x-y|^2 \notin D$, and}\\
0 & \text{otherwise}
\end{cases}
\]
shows that the total number of ordered pairs of distinct particles $x,y$ with $|x-y|^2 \notin D$ has expectation zero. Therefore almost surely all distinct particles $x,y$ satisfy $|x-y|^2 \in D$. In particular, because $0 \notin D$, the point process is simple: no two particles ever occupy the same location.

Let $X$ be any nonempty subset of $\R^d$ satisfying this distance constraint, and let $x_0 \in X$. As shown in Lemma~8.2 of \cite{CohnElkies} using the polarization identity, the $\Z$-span of the translate $X-x_0$ must be an even lattice within its real span, i.e., a lattice in which all squared norms are even integers and therefore all inner products are integers by polarization. 

Given a positive integer $n$, let
\[
Y_n = \frac{\eta(B^d_n(0))}{\vol (B^d_n(0))},
\]
which is a random variable with expected value~$1$. Then $Y_n$ is almost surely bounded by a constant that does not depend on $n$, because the particles are almost surely separated by distance at least $\sqrt{2}$. Specifically, the open balls of radius $\sqrt{2}/2$ centered at the particles in $B^d_n(0)$ are disjoint and contained in $B^d_{n+\sqrt{2}/2}(0)$, and therefore
\begin{equation} \label{eq:packing}
Y_n \le \frac{\vol \mathopen{}\left(B^d_{n+\sqrt{2}/2}(0)\right)\mathclose{}}{\vol (B^d_n(0)) \vol \mathopen{}\left(B^d_{\sqrt{2}/2}(0)\right)\mathclose{}}
= \frac{\left(1+\frac{\sqrt{2}}{2n}\right)^d}{\vol \mathopen{}\left(B^d_{\sqrt{2}/2}(0)\right)\mathclose{}}
\le \frac{\left(1+\frac{\sqrt{2}}{2}\right)^d}{\vol \mathopen{}\left(B^d_{\sqrt{2}/2}(0)\right)\mathclose{}}.
\end{equation}

For an empty sample from $\eta$, $Y_n=0$ for all $n$. For every nonempty sample satisfying the distance constraint, the points are contained in a translate of an even lattice $\Lambda_0$, possibly of lower dimension than $d$. If $\Lambda_0$ is full dimensional and $G$ is the Gram matrix of a basis, then
\[
\limsup_{n \to \infty} Y_n \le \frac{1}{\sqrt{\det G}},
\]
while this limit superior is zero if $\Lambda_0$ is not full dimensional. Because $\det G$ is a positive integer, in either case
\[
\limsup_{n \to \infty} Y_n \le 1.
\]

Because there is a constant $C$ such that almost surely $Y_n \le C$ for every $n$, Fatou's lemma applied to the nonnegative random variables $C-Y_n$ gives
\[
1 = \limsup_{n \to \infty} \E[Y_n] \le \E\left[\limsup_{n \to \infty} Y_n\right] \le 1,
\]
and we conclude that
\[
\limsup_{n \to \infty} Y_n = 1
\]
almost surely. It follows that $\eta$ is almost surely nonempty and that $\Lambda_0$ is almost surely full dimensional and unimodular.

Let $\Lambda$ be the random translated lattice constructed as follows. Start with a proper enumeration $X_1,X_2,\dots$ of the particles in $\eta$, let $\Lambda_0$ be the $\Z$-span of the differences $X_n-X_1$, and set $\Lambda = \Lambda_0 + X_1$. The resulting coset is independent of the choice of enumeration and base point, and enumerating its elements without repetitions establishes measurability. This construction is equivariant under isometries, and thus $\Lambda$ is an isometry-invariant point process. The points in $\Lambda$ are separated by distance at least $\sqrt{2}$, so $\Lambda$ is locally square-integrable. Furthermore,
\[
\lim_{n \to \infty} \frac{\Lambda(B^d_n(0))}{\vol(B^d_n(0))} = 1
\]
almost surely because $\Lambda_0$ is unimodular. The random variables $\Lambda(B^d_n(0))/\vol(B^d_n(0))$ have the same deterministic upper bound as $Y_n$ in \eqref{eq:packing}, and their expectations are equal to the intensity of $\Lambda$. Dominated convergence therefore shows that $\Lambda$ has intensity~$1$.

The points in $\Lambda$ that are missing from $\eta$ form an isometry-invariant point process of intensity~$0$, because both $\eta$ and $\Lambda$ have intensity~$1$. A point process of intensity~$0$ is almost surely empty, and therefore $\eta = \Lambda$ almost surely. Finally, $\eta$ must almost surely be isometric to $\Lambda_d$, since up to isometry $E_8$ is the only even unimodular lattice in $\R^8$ and the Leech lattice is the only even unimodular lattice in $\R^{24}$ that has no vectors of length $\sqrt{2}$ (see, for example, Proposition~2.5 and Theorem~4.1 in \cite{Ebeling}). We conclude that $\eta$ must be the isometry-invariant $\Lambda_d$ process, by the uniqueness of the invariant probability measure on the corresponding homogeneous space.
\end{proof}

We now have all the tools needed to prove our main theorem.

\begin{proof}[Proof of Theorem~\ref{theorem:main}]
The upper bounds for $\stealth(\eta)$ follow from Theorem~\ref{theorem:LP} using the auxiliary functions constructed by Viazovska \cite{Viazovska} to solve the sphere packing problem in $\R^8$ and by Cohn, Kumar, Miller, Radchenko, and Viazovska \cite{CKMRV2017} to solve it in $\R^{24}$. These bounds are attained by the isometry-invariant $E_8$ and Leech lattice processes, so all that remains is to determine the equality cases.

Let $d \in \{8,24\}$, let $\Lambda_d$ be $E_8$ or the Leech lattice, respectively, and let $f_d$ be the optimal auxiliary function. The lengths of the nonzero vectors in $\Lambda_d$ are $\{\sqrt{2k} : k \in \Z_{>0}\}$ when $d=8$ and $\{\sqrt{2k} : k \in \Z_{>1}\}$ when $d=24$.

To understand the equality cases using Theorem~\ref{theorem:LP}, we will need to know that $\widehat{f}_d(x)=0$ exactly when $|x|$ is a nonzero vector length in $\Lambda_d$. In dimension~$8$, this assertion is part of Theorem~3 in \cite{Viazovska}. In dimension~$24$, equation~(4.2) and Lemma~A.2 in \cite{CKMRV2017} handle the case $|x| > \sqrt{2}$, Lemma~A.3 and the decomposition before equation~(4.5) handle $|x| < \sqrt{2}$, and $\widehat{f}_{24}(x) = 1/156$ when $|x| = \sqrt{2}$.

Suppose $\eta$ attains the upper bound. Theorem~\ref{theorem:LP} then gives
\[
\supp \alpha^!_{2,\eta} \subseteq \{ x \in \R^d: \widehat{f}_d(x)=0\},
\]
which is precisely the support condition needed for Lemma~\ref{lemma:rigidity}. The lemma therefore implies that $\eta$ is the isometry-invariant $\Lambda_d$ process, as desired.
\end{proof}

Note that the proof of the upper bound for $\stealth(\eta)$ requires only translation invariance, not full isometry invariance. The stationary processes that attain this bound are precisely the mixtures of the stationary lattice processes obtained from different orientations of the optimal lattice.

\section{Extensions to related problems}
\label{section:extensions}

\subsection{Sphere packing}

The uniqueness argument from the previous section also applies to related optimization problems. For example, the Bowen--Radin framework \cite{Bowen2000,Bowen2003a,Bowen2003b,BowenHoltonRadinSadun,BowenRadin2004} studies isometry-invariant probability measures on the space of packings. Equivalently, for packings of congruent spheres, it deals with random packings whose sphere centers form an isometry-invariant point process. Wackenhuth \cite{Wackenhuth} pioneered the use of linear programming bounds in this setting, while our methods prove uniqueness.

The \emph{minimal distance} of a point process $\eta$ in $\R^d$ is the supremum of all $r \ge 0$ such that almost surely all distinct particles are separated by distance at least $r$.

\begin{theorem}
Let $d \in \{8,24\}$, let $\Lambda_d$ be $E_8$ or the Leech lattice, respectively, and let $r_d$ be the minimal distance of $\Lambda_d$, so that $r_8=\sqrt{2}$ and $r_{24}=2$. Suppose $\eta$ is an isometry-invariant point process on $\R^d$ with minimal distance at least $r_d$. Then the intensity of $\eta$ is at most~$1$, with equality if and only if $\eta$ is the isometry-invariant $\Lambda_d$ process.

Equivalently, if $\eta$ is an isometry-invariant point process of intensity~$1$ and minimal distance $r$, then $r \le r_d$, with equality if and only if $\eta$ is the isometry-invariant $\Lambda_d$ process.
\end{theorem}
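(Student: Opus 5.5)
We will prove the sphere packing theorem by running the Cohn--Elkies linear programming bound in its original orientation, using the same point-process formalism as Theorem~\ref{theorem:LP}. Its equality case then feeds into Lemma~\ref{lemma:rigidity}. First, we normalize. A point process with minimal distance at least $r_d$ has particles almost surely separated, so it is locally square-integrable by the packing argument in \eqref{eq:packing}. Suppose its intensity is $\rho>0$. Rescaling space by $\rho^{1/d}$ yields an isometry-invariant, locally square-integrable process of intensity~$1$ with minimal distance at least $\rho^{1/d} r_d$. Hence the two formulations are equivalent, and it suffices to prove the second: intensity~$1$ and minimal distance $r$ imply $r\le r_d$, with equality only for the $\Lambda_d$ process. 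For $\rho=0$ the process is almost surely empty and there is nothing to prove.

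For the inequality, let $f=f_d$ be the Viazovska or Cohn--Kumar--Miller--Radchenko--Viazovska function. It is even, Schwartz, satisfies $f(0)=\widehat f(0)=1$, $f(x)\le 0$ for $|x|\ge r_d$, and $\widehat f\ge 0$. Because the minimal distance is $r\ge r_d$, the measure $\alpha^!_{2,\eta}$ is supported in $\{|x|\ge r_d\}$. Pairing $f$ against $\kappa_\eta$ gives
\[
\int \widehat f\, dS_\eta=\langle \kappa_\eta,f\rangle = f(0)+\int f\,d\alpha^!_{2,\eta}-\widehat f(0)\le 0 .
\]
On the other hand, $S_\eta\ge 0$ and $\widehat f\ge 0$, so the left side is nonnegative. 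If $r>r_d$, we instead rescale $f$ to $f(x r_d/r)$, which is nonpositive for $|x|\ge r$ and has Fourier transform with value $(r/r_d)^d$ at $0$. The same computation gives $1-(r/r_d)^d\ge 0$, so $r\le r_d$. (If we only know minimal distance $\ge r$ as a supremum, then almost surely the distances are $\ge r-\epsilon$ for every $\epsilon$, and we let $\epsilon\to 0$.)

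In the equality case $r=r_d$, both sides vanish, so $\int f\,d\alpha^!_{2,\eta}=0$. Since $f\le 0$ on the support of $\alpha^!_{2,\eta}$, this forces $\supp\alpha^!_{2,\eta}\subseteq\{x: f(x)=0,\ |x|\ge r_d\}$. We then need the fact that $f_d$ itself (not $\widehat f_d$) vanishes for $|x|\ge r_d$ exactly when $|x|^2\in D$, that is, at the nonzero vector lengths of $\Lambda_d$. This is the root-location statement in \cite{Viazovska} and \cite{CKMRV2017}, the analogue of what was cited for $\widehat f_d$ in the proof of Theorem~\ref{theorem:main}. The main obstacle is confirming this precise sign and root information for $f_d$ on $|x|\ge r_d$. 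In particular, we must verify that there are no extra zeros, such as a double root at some non-lattice radius, and that in dimension~$24$ the radius $\sqrt2$ is excluded automatically because $r_{24}=2$. We would extract this from Theorem~3 of \cite{Viazovska} and the analysis in Section~4 and the appendix of \cite{CKMRV2017}. With this in hand, $\supp\alpha^!_{2,\eta}\subseteq\{|x|^2\in D\}$, and Lemma~\ref{lemma:rigidity} shows that $\eta$ is the isometry-invariant $\Lambda_d$ process. Conversely, that process attains intensity~$1$ with minimal distance $r_d$.
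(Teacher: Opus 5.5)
Your proposal is correct and follows essentially the same route as the paper. You pair $f_d$ against $\kappa_\eta$, deduce $\supp\alpha^!_{2,\eta}\subseteq\{x: f_d(x)=0,\ |x|\ge r_d\}$, identify this set as the lattice shells using the root information in \cite{Viazovska} and \cite{CKMRV2017} (which the paper simply cites, as you anticipated), and finish with Lemma~\ref{lemma:rigidity}. The only cosmetic difference is how the inequality is obtained: you prove $r\le r_d$ by rescaling $f_d$, whereas the paper treats intensity $1$ with minimal distance at least $r_d$ and handles the general case by rescaling the process and invoking uniqueness.
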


\begin{proof}
The minimal distance condition implies local square-integrability, and so we can apply the covariance and structure factor formalism from Section~\ref{section:background}. First suppose $\eta$ has intensity~$1$ and minimal distance at least $r_d$, and let $f_d$ be the optimal auxiliary function for the linear programming bound. Then
\[
0 \le \int \widehat{f}_d(\xi) \, dS_\eta(\xi)
= \int f_d(x) \, d\kappa_\eta(x)
= \int f_d(x) \, d\alpha^!_{2,\eta}(x) \le 0,
\]
where the first inequality uses $\widehat{f}_d(\xi) \ge 0$, the second equality uses $f_d(0) = \widehat{f}_d(0) = 1$, and the final inequality uses $f_d(x) \le 0$ for $|x| \ge r_d$. Thus, equality holds throughout. 

Because $f_d$ is continuous and nonpositive on the support of $\alpha^!_{2,\eta}$, the equality above and the minimal-distance condition give
\[
\supp\alpha^!_{2,\eta} \subseteq \{ x \in \R^d : \text{$f_d(x) = 0$ and $|x| \ge r_d$}\}.
\]
By Theorem~3 in \cite{Viazovska} and Section~4 of \cite{CKMRV2017}, this set is precisely the spheres centered at the origin that contain nonzero points of $\Lambda_d$. Lemma~\ref{lemma:rigidity} therefore implies that $\eta$ is the isometry-invariant $\Lambda_d$ process, and the assertion for arbitrary intensity follows by rescaling.
\end{proof}

Optimal sphere packing processes in other dimensions need not be unique. For example, in $\R^3$ the face-centered cubic and hexagonal close packing configurations give different optimal packing processes \cite{Hales,Flyspeck}.

\subsection{Energy minimization}

Similarly, ground states of isotropic pair potentials can be studied among isometry-invariant, locally square-integrable point processes. Cohn, Kumar, Miller, Radchenko, and Viazovska \cite{CKMRV2022} proved that $E_8$ and the Leech lattice are universally optimal and are unique ground states among periodic configurations, while our methods can be adapted as follows to prove uniqueness among point processes.

Our potential function will be a measurable function $p \colon [0,\infty) \to [0,\infty]$. If $\eta$ is a stationary, locally square-integrable point process on $\R^d$ with intensity~$1$, we define the \emph{energy} of $\eta$ under $p$ to be
\[
E_p(\eta) = \int p(|x|) \, d\alpha^!_{2,\eta}(x),
\]
which agrees with the definition in \cite{CKMRV2022} when $\eta$ is the isometry-invariant process derived from a unit-density periodic configuration. We will also write $E_p(\cC)$ for a unit-density periodic configuration $\cC$, in which case we mean to use the corresponding isometry-invariant point process. In physics terms, the average energy per particle under the potential function~$p$ is $E_p(\eta)/2$, because $\alpha^!_{2,\eta}$ counts ordered pairs.

The energy $E_p(\eta)$ may be infinite in general, but when $p(r) = e^{-\alpha r^2}$ with $\alpha>0$, it is always finite, because $\alpha^!_{2,\eta}(B^d_r(0)) = O(r^d)$ as $r \to \infty$. Furthermore, the Gaussian energy is a continuous function of $\alpha \in (0,\infty)$, by dominated convergence.

The analogue of linear programming bounds \cite{CohnKumar,CohndeCourcyIreland} is as follows:

\begin{theorem} \label{theorem:potential}
Let $f \in \cS(\R^d)$ be real-valued and even. If $f(x) \le p(|x|)$ for all $x$ and $\widehat{f}(\xi) \ge 0$ for all $\xi$, then every stationary, locally square-integrable point process $\eta$ on $\R^d$ with intensity~$1$ satisfies
\[
E_p(\eta) \ge \widehat{f}(0)-f(0).
\]
Equality holds for $\eta$ if and only if $f(x) = p(|x|)$ holds $\alpha^!_{2,\eta}$-almost everywhere and $\supp S_\eta \subseteq \{ \xi \in \R^d: \widehat{f}(\xi)=0\}$; if $p$ is finite-valued and continuous, the first condition becomes $\supp \alpha^!_{2,\eta} \subseteq \{ x \in \R^d: f(x)=p(|x|)\}$.
\end{theorem}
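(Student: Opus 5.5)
The plan is to mirror the proof of Theorem~\ref{theorem:LP}, this time with $f$ on the physical-space side as in the Cohn--Kumar bound. We pair $f$ against the reduced covariance measure and use $\kappa_\eta=\widehat{S}_\eta$:
\[
\int \widehat{f}(\xi)\,dS_\eta(\xi) = \int f(x)\,d\kappa_\eta(x) = f(0) + \int f(x)\,d\alpha^!_{2,\eta}(x) - \widehat{f}(0).
\]
The pairing $\langle \kappa_\eta, f\rangle = \langle S_\eta,\widehat{f}\,\rangle$ is legitimate because $f$ is Schwartz and both $\kappa_\eta$ and $S_\eta$ are tempered. Here we use that $f$ is even, so $\widehat{\widehat f}=f$. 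The term $\int f\,d\lambda_d$ equals $\widehat f(0)$.

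The left side is nonnegative because $S_\eta$ is a positive measure and $\widehat f\ge 0$. Hence
\[
\int f\,d\alpha^!_{2,\eta}\ge \widehat f(0)-f(0).
\]
Since $\alpha^!_{2,\eta}$ is a positive measure and $f(x)\le p(|x|)$, we also have
\[
E_p(\eta)=\int p(|x|)\,d\alpha^!_{2,\eta}\ge \int f\,d\alpha^!_{2,\eta},
\]
and this gives the bound. We should check integrability here. The integral $\int f\,d\alpha^!_{2,\eta}$ is finite because $f$ is Schwartz and $\alpha^!_{2,\eta}$ is translation bounded. The integral defining $E_p(\eta)$ is of a nonnegative function, so it is well defined in $[0,\infty]$. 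If $E_p(\eta)=\infty$, the inequality is trivial and equality cannot hold, since the right side is finite.

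For the equality case, assume $E_p(\eta)$ is finite. Equality holds exactly when both inequalities above are equalities. The first is an equality iff $\int \widehat f\,dS_\eta=0$. Since $\widehat f$ is continuous and nonnegative and $S_\eta$ is a positive Radon measure, this holds iff $S_\eta$ gives zero mass to the open set $\{\widehat f>0\}$, that is, iff $\supp S_\eta\subseteq\{\widehat f=0\}$. The second is an equality iff $\int (p(|x|)-f(x))\,d\alpha^!_{2,\eta}=0$. The integrand is nonnegative (possibly $+\infty$), so this holds iff $p(|x|)=f(x)$ for $\alpha^!_{2,\eta}$-almost every $x$. Conversely, these two conditions force equality in both steps, and in that case $E_p(\eta)=\int f\,d\alpha^!_{2,\eta}$ is automatically finite.

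Finally, suppose $p$ is finite-valued and continuous. Then $x\mapsto p(|x|)-f(x)$ is continuous and nonnegative, so it vanishes almost everywhere for the positive Radon measure $\alpha^!_{2,\eta}$ iff it vanishes on $\supp\alpha^!_{2,\eta}$. This is because the set where it is positive is open, and an open set has measure zero iff it misses the support. This gives the stated support condition.

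The only delicate point is the bookkeeping with infinite values of $p$ and the justification of the distributional Parseval identity for the positive measure $S_\eta$. For the latter, I would handle it exactly as in the proof of Theorem~\ref{theorem:LP}, where $S_\eta$ is tempered and the pairing with a Schwartz function is its defining duality.
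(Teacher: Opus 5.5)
Your proposal is correct and follows the paper's proof: both use $\kappa_\eta=\widehat{S}_\eta$ to write $E_p(\eta)-\bigl(\widehat f(0)-f(0)\bigr)$ as $\int (p(|x|)-f(x))\,d\alpha^!_{2,\eta}(x)+\int \widehat f\,dS_\eta$, and read off the bound and the equality conditions from the nonnegativity of each term. Your handling of the case $E_p(\eta)=\infty$ and of the support conditions simply makes explicit what the paper leaves implicit.
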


\begin{proof}
The identity $\widehat{\kappa}_\eta = S_\eta$ and the definition of $E_p(\eta)$ imply that
\[
E_p(\eta) - \big(\widehat{f}(0)-f(0)\big) = \int \big(p(|x|)-f(x)\big) \, d\alpha^!_{2,\eta}(x) + \int \widehat{f}(\xi) \, dS_\eta(\xi).
\]
Both terms on the right are nonnegative because $p(|x|) \ge f(x)$ and $\widehat{f}(\xi) \ge 0$, which proves the bound and the equality conditions.
\end{proof}

For $E_8$ and the Leech lattice, \cite{CKMRV2022} analyzed potential functions that are completely monotonic functions of squared distance. In other words, $p(r) = q(r^2)$ for $r>0$, where $q \colon (0,\infty) \to [0,\infty)$ is a smooth function satisfying $(-1)^k q^{(k)}(t) \ge 0$ for all $k \ge 0$, and we set $p(0) := \lim_{r \to 0+} p(r) \in[0,\infty]$. Inverse power laws and Gaussians are important examples, and Bernstein's theorem (Theorem~9.16 in \cite{Simon}) says that any such potential function can be written as a convergent integral 
\begin{equation}
\label{eq:bernstein}
p(r) = \int_{[0,\infty)} e^{-\alpha r^2} \, d\mu(\alpha)
\end{equation}
for $r>0$, where $\mu$ is a Borel measure on $[0,\infty)$.

\begin{theorem}
Let $d \in \{8,24\}$, let $\Lambda_d$ be $E_8$ or the Leech lattice, respectively, and let $p \colon [0,\infty) \to [0,\infty]$ be a completely monotonic function of squared distance. Suppose $\eta$ is an isometry-invariant, locally square-integrable point process on $\R^d$ with intensity $1$. Then $E_p(\eta) \ge E_p(\Lambda_d)$. If equality holds, $p$ is not identically zero, and $E_p(\Lambda_d) < \infty$, then $\eta$ must be the isometry-invariant $\Lambda_d$ process.
\end{theorem}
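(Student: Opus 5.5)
Via Bernstein's representation \eqref{eq:bernstein}, I will first reduce to Gaussians and then transfer the Gaussian equality conditions back to $p$.

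\emph{Step 1 (Gaussian case).} For each $\alpha>0$, \cite{CKMRV2022} constructs an auxiliary function $f_\alpha \in \cS(\R^d)$, real and even, with $f_\alpha(x) \le e^{-\alpha|x|^2}$ for all $x$, $\widehat{f}_\alpha \ge 0$, and $\widehat{f}_\alpha(0)-f_\alpha(0) = E_{p_\alpha}(\Lambda_d)$, where $p_\alpha(r)=e^{-\alpha r^2}$. The construction also gives $f_\alpha(x)=e^{-\alpha|x|^2}$ exactly when $|x|$ is a nonzero vector length of $\Lambda_d$, that is, when $|x|^2\in D$ with $D$ as in Lemma~\ref{lemma:rigidity}. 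Theorem~\ref{theorem:potential} then yields $E_{p_\alpha}(\eta)\ge E_{p_\alpha}(\Lambda_d)$ for every $\alpha>0$. If equality holds for some $\alpha$, its equality condition for continuous $p$ gives $\supp\alpha^!_{2,\eta}\subseteq\{x: |x|^2\in D\}$. Here I also need that $f_\alpha(x) < e^{-\alpha|x|^2}$ at $x=0$ and off these spheres, which I would cite from \cite{CKMRV2022}. Lemma~\ref{lemma:rigidity} then identifies $\eta$.

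\emph{Step 2 (integration).} For a general $p$, Tonelli applied to \eqref{eq:bernstein} gives $E_p(\eta)=\int E_{p_\alpha}(\eta)\,d\mu(\alpha)$, since all quantities are nonnegative. The same holds for $\Lambda_d$. The $\alpha=0$ atom contributes $\mu(\{0\})\,\alpha^!_{2,\eta}(\R^d)$. This is $+\infty$ for both $\eta$ and $\Lambda_d$ unless $\mu(\{0\})=0$, because $\alpha^!_{2,\eta}(\R^d)=\infty$ for an intensity-$1$ stationary process; I would check this via the covariance identity, or else note it does not affect the inequality. I must also account for the value $p(0)$, but $\alpha^!_{2,\eta}(\{0\})$ can be positive, and then $p(0)$ only increases $E_p(\eta)$. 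Integrating the Gaussian inequality in $\alpha$ gives $E_p(\eta)\ge E_p(\Lambda_d)$.

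\emph{Step 3 (equality).} Suppose $E_p(\eta)=E_p(\Lambda_d)<\infty$ and $p\not\equiv 0$. Then $\mu$ restricted to $(0,\infty)$ is nonzero; otherwise $p$ is a constant $c>0$ and the energy is infinite. The finite nonnegative function $\alpha\mapsto E_{p_\alpha}(\eta)-E_{p_\alpha}(\Lambda_d)$ integrates to zero, so it vanishes $\mu$-almost everywhere, and in particular at some $\alpha>0$ in the support of $\mu$. Finiteness of the difference needs $E_{p_\alpha}(\Lambda_d)<\infty$ $\mu$-almost everywhere, which follows from finite total energy. Step 1 then applies. There is a subtlety when $\alpha^!_{2,\eta}$ charges $\{0\}$: the $\mu$-almost-everywhere equality for each $\alpha>0$ with finite Gaussian energy is unaffected, since Gaussian energies are always finite.

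The main obstacle is Step 1's strict inequality off the spheres. That strictness is what \cite{CKMRV2022} establishes for their interpolation-based $f_\alpha$, and I would extract it from their proof of uniqueness among periodic configurations. If strictness were not available directly, I would use the fact that $\widehat{f}_\alpha$ vanishes exactly on the dual sphere radii, together with the second equality condition on $S_\eta$.
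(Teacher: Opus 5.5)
Your proposal is correct and follows the paper's proof. Both prove the Gaussian case, including uniqueness, via the auxiliary functions of \cite{CKMRV2022}, Theorem~\ref{theorem:potential}, and Lemma~\ref{lemma:rigidity}, and then integrate using Bernstein's representation and Tonelli, with the $\alpha=0$ term infinite for both processes so that finite energy forces $\mu(\{0\})=0$. The only difference is in the final step: you get a point $\alpha>0$ where the nonnegative difference of Gaussian energies vanishes because it integrates to zero against a nonzero $\mu$, whereas the paper argues by contradiction, using continuity in $\alpha$ to bound the difference below on a compact interval $[a,b]$. Your version does not need that continuity.
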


\begin{proof}
When $p$ is a Gaussian, Lemma~6.2 in \cite{CKMRV2022} provides an auxiliary function $f$ for which Theorem~\ref{theorem:potential} proves that $E_p(\eta) \ge E_p(\Lambda_d)$. When equality holds, Theorem~\ref{theorem:potential} gives
\[
\supp \alpha^!_{2,\eta} \subseteq \{ x \in \R^d: f(x)=p(|x|)\}.
\]
Lemma~6.2 in \cite{CKMRV2022} and the remark immediately following it state that $f(x) = p(|x|)$ precisely when $|x|$ is the length of a vector in $\Lambda_d \setminus \{0\}$, so Lemma~\ref{lemma:rigidity} proves uniqueness.

However, other potential functions may decay too slowly to be interpolated by a Schwartz function. For more general $p$, let $\mu$ satisfy \eqref{eq:bernstein}, and set
\[
G_\eta(\alpha) = E_{r \mapsto e^{-\alpha r^2}}(\eta) = \int e^{-\alpha |x|^2} \, d\alpha^!_{2,\eta}(x). 
\]
Then Tonelli's theorem implies that
\[
E_p(\eta) = \int_{[0,\infty)} G_\eta(\alpha) \, d\mu(\alpha).
\]
Gaussian optimality yields $G_\eta(\alpha) \ge G_{\Lambda_d}(\alpha)$ for all $\alpha>0$, and $G_\eta(0) = G_{\Lambda_d}(0)=\infty$, so integration gives
\[
E_p(\eta) \ge E_p(\Lambda_d).
\]

For uniqueness, suppose $p$ is not identically zero, $E_p(\Lambda_d)<\infty$, and $\eta$ is not the isometry-invariant $\Lambda_d$ process. Then uniqueness in the Gaussian case gives
\[
h(\alpha) := G_\eta(\alpha) - G_{\Lambda_d}(\alpha) > 0
\]
for $\alpha > 0$, and $\mu(\{0\})=0$ since otherwise $p(r) \ge \mu(\{0\}) > 0$ would imply $E_{p}(\Lambda_d)=\infty$. Because $\mu \ne 0$ and $\mu(\{0\})=0$, some compact interval $[a,b] \subseteq (0,\infty)$ has $\mu([a,b])>0$. By continuity,
\[
\min_{\alpha \in [a,b]} h(\alpha) > 0,
\]
and therefore
\begin{align*}
E_p(\eta) &= E_p(\Lambda_d) + \int_{(0,\infty)} h(\alpha) \, d\mu(\alpha)\\
&\ge E_p(\Lambda_d) + \mu([a,b]) \min_{\alpha \in [a,b]} h(\alpha)\\
&> E_p(\Lambda_d),
\end{align*}
as desired.
\end{proof}

\section{A stealthy formal dual of Vardy's packing}
\label{section:vardy}

\subsection{Formal duality and diffraction}

Let $\Lambda$ be a lattice in $\R^d$ and $\cP = \bigcup_{j=1}^n (\Lambda + v_j)$ a periodic configuration, where we assume the translates $\Lambda+v_j$ are distinct. The particle density of $\cP$ is $\density(\cP) = n/\vol(\R^d/\Lambda)$. Given a Schwartz function $f \in \cS(\R^d)$, we let
\[
\Sigma_f(\cP) = \frac{1}{n} \sum_{j,k=1}^n \sum_{x \in \Lambda} f(x+v_j-v_k)
\]
be the \emph{average pair sum} of $f$ over $\cP$, or equivalently the average of $\sum_{y \in \cP} f(y-x)$ over $x \in \cP$. Periodic configurations $\cP$ and $\cQ$ are \emph{formally dual} if
\[
\Sigma_f(\cP) = \density(\cP) \, \Sigma_{\widehat{f}}(\cQ)
\]
for all $f \in \cS(\R^d)$. See \cite{CKRS} and \cite{LiPottSchuler} for the theory of formal duality.

In our setting, we can rephrase formal duality as follows. Suppose $\cP$ and $\cQ$ have particle density~$1$, and $\eta_\cP$ and $\eta_\cQ$ are the stationary point processes obtained by random translation. If we set
\[
\mu_\cP=\delta_0+\alpha^!_{2,\eta_\cP}
\quad\text{and}\quad
\mu_\cQ=\delta_0+\alpha^!_{2,\eta_\cQ},
\]
then $\Sigma_f(\cP) = \int f \, d\mu_\cP$ and similarly for
$\cQ$, so $\cP$ and $\cQ$ are formally dual if and only if
$\mu_\cP=\widehat{\mu}_\cQ$. Equivalently,
\[
S_{\eta_\cQ}=\alpha^!_{2,\eta_\cP}
\quad\text{and}\quad
S_{\eta_\cP}=\alpha^!_{2,\eta_\cQ}.
\]
Thus formal duality interchanges the minimal distance of one configuration with the stealth radius of the other.

More generally, without the unit-density normalization, let $\nu_{\cP}$ be the measure characterized by $\Sigma_f(\cP) = \int f \, d\nu_\cP$ for all $f \in \cS(\R^d)$. Formal duality is then equivalent to $\nu_{\cP} = \density(\cP) \, \widehat{\nu}_{\cQ}$. The density factor changes the weights but not the support: the minimal distance of $\cP$ is the distance from the origin to the first nonzero Bragg peak of $\cQ$, and vice versa.

\subsection{Vardy's sphere packing}

We now construct a formal dual of Vardy's $20$-dimensional sphere packing \cite{Vardy}. We use the standard coordinate model of the Leech lattice $\Lambda_{24}$, consisting of certain integer vectors divided by $\sqrt{8}$ (see, for example, \cite[p.~131]{SPLAG}). Let $U$ be the span of the first four coordinate vectors, let $V = U^\perp$, let $\pi_U$ and $\pi_V$ be the orthogonal projections, and let
\[
K = \Lambda_{24} \cap U \quad\text{and}\quad L = \Lambda_{24} \cap V.
\]
In fact,
\[
K = \sqrt{2} D_4 = \sqrt{2} \{x \in \Z^4 : x_1+x_2+x_3+x_4 \equiv 0 \pmod{2} \},
\]
while $L$ is the laminated lattice $\Lambda_{20}$.

Because $\Lambda_{24}$ is unimodular and $K$ and $L$ are primitive orthogonal complements, there is a gluing isomorphism $\gamma \colon K^*/K \to L^*/L$ characterized by $\gamma(u+K) = v+L$ if and only if $u+v \in \Lambda_{24}$ (see \cite[p.~100]{SPLAG}). If $\gamma(u_1+K) = v_1+L$ and $\gamma(u_2+K)=v_2+L$, then $\langle u_1, u_2 \rangle \equiv - \langle v_1, v_2 \rangle \pmod{\Z}$, because $\langle u_1, u_2 \rangle + \langle v_1, v_2 \rangle = \langle u_1+v_1, u_2+v_2 \rangle \in \Z$.

Conway and Sloane \cite{ConwaySloane1996} described Vardy's packing using the four points
\[
A = \frac{1}{\sqrt{8}} \{ (0,0,0,0),\ (1,1,1,1),\ (2,0,0,0),\ (1,1,1,-1)\} \subseteq K^*.
\]
The sphere centers in Vardy's packing are
\[
\cV_{20} = \{\pi_V(w) : w \in \Lambda_{24} \text{ and } \pi_U(w) \in A\}.
\]
The set $A$ is a regular tetrahedron with squared edge length $1/2$, and the squared minimal distance in $\Lambda_{24}$ is $4$, which means $\cV_{20}$ has squared minimal distance at least $4-1/2 = 7/2$. In fact it is exactly $7/2$, because $\cV_{20}$ contains the origin and $(-3,1,1,\dots,1)/\sqrt{8}$, which is the projection of
\[
w = \frac{1}{\sqrt{8}}(1,1,1,1,-3,\overbrace{1,1,\dots,1}^{\text{nineteen $1$'s}})
\]
from $\Lambda_{24}$ to $V$ (see equation~(133) in Chapter~4 of \cite{SPLAG} for why $w \in \Lambda_{24}$).

\subsection{The Li--Pott--Sch\"uler dual pair}

Let
\[
e_1 = \frac{1}{\sqrt{8}}(-2,0,0,2), \quad e_2 = \frac{1}{\sqrt{8}}(1,1,1,-1), \quad e_3 = \frac{1}{\sqrt{8}}(2,0,0,0).
\]
In $K^*/K$, the element $e_1+K$ has order $2$, while $e_2+K$ and $e_3+K$ have order $4$, and these three classes generate a subgroup $G$ isomorphic to $\Z/2\Z \times \Z/4\Z \times \Z/4\Z$. We write elements of $G$ as $x_1e_1+x_2e_2+x_3e_3+K$ with $x_1 \in \Z/2\Z$ and $x_2,x_3 \in \Z/4\Z$. In these coordinates, the image of $A$ in $K^*/K$ becomes
\[
\{(0,0,0),\ (0,0,1),\ (0,1,0),\ (1,1,1)\}.
\]

In the finite abelian group setting of Definition~2.9 in \cite{CKRS}, Li, Pott, and Sch\"uler's Example~3.22 \cite{LiPottSchuler} states that the subset
\[
S = \{(0,0,0),\ (0,0,1),\ (0,1,0),\ (1,1,1)\}
\]
of $\Z/2\Z \times \Z/4\Z \times \Z/4\Z$ is formally dual to
\[
T = \{(0,0,0),\ (0,0,1),\ (0,1,0),\ (0,1,1),\ (1,0,0),\ (1,0,3),\ (1,3,0),\ (1,3,3)\}
\]
in the character group.

To make use of this duality, we map elements of $K^*/K$ to elements of the dual group $\widehat{G}$ of characters of $G$ by using the perfect discriminant pairing
\[
[x+K,y+K]=e^{2\pi i\langle x,y\rangle}
\]
on $K^*/K$. Let
\[
f_1=\frac{1}{\sqrt{8}}(0,0,2,2), \quad f_2=\frac{1}{\sqrt{8}}(0,0,2,0), \quad f_3=\frac{1}{\sqrt{8}}(1,-1,1,1),
\]
which are in $K^*$ and satisfy
\[
\langle f_i,e_j\rangle \in
\begin{cases}
1/2+\Z & \text{if $i=j=1$,}\\
1/4+\Z & \text{if $i=j\in \{2,3\}$, and}\\
\Z & \text{if $i\ne j$.}
\end{cases}
\]
Thus pairing with
\[
r_{a,b,c}=a f_1+b f_2+c f_3
\]
via $x + K \mapsto [x+K, r_{a,b,c}+K]$ yields the character
\[
x_1e_1+x_2e_2+x_3e_3+K
\mapsto
(-1)^{a x_1} i^{b x_2+c x_3}
\]
of $G$.

Next let
\[
v = \frac{1}{\sqrt{8}}(4,2,2,0),
\]
which is in $K^*$. Since $K=\sqrt{2} D_4$ has discriminant $64$, the finite group $K^*/K$ has order $64$, and the pairing on $K^*/K$ is perfect. The subgroup $G$ has order $32$, so its annihilator $G^\perp$ under the pairing has order $2$. The vector $v$ is not in $K$ but has integral inner product with $e_1,e_2,e_3$, and hence pairs trivially with $G$. Therefore $G^\perp=\{0,v+K\}$, and each character of $G$ has precisely two preimages under the restriction map $\rho\colon K^*/K\to \widehat{G}$ induced by the pairing, namely $r_{a,b,c}+K$ and $r_{a,b,c}+v+K$.

Let
\[
A^* = \{r_{a,b,c} : (a,b,c) \in T\} \cup \{r_{a,b,c}+v : (a,b,c) \in T\} \subseteq K^*,
\]
so that the image of $A^*$ in $K^*/K$ is precisely $\rho^{-1}(T)$. Applying Lemma~4.1 of \cite{CKRS} to the formally dual pair $S$ and $T$ shows that the images of $A$ and $A^*$ in $K^*/K$ are formally dual.

\subsection{The formal dual of Vardy's packing}

Now we can define the formal dual of $\cV_{20}$ by
\[
\cV^*_{20}
= \{\pi_V(w) : w \in \Lambda_{24} \text{ and } \pi_U(w) \in A^*\}.
\]
The gluing isomorphism $\gamma\colon K^*/K \to L^*/L$ sends the residue classes represented by $A$ and $A^*$ to the cosets of $L$ occurring in $\cV_{20}$ and $\cV^*_{20}$, respectively. As noted above, the map $\gamma$ is an anti-isometry of the discriminant pairing, but the resulting minus sign only complex conjugates the character sums, while their squared absolute values and hence formal duality are unchanged. Theorem~2.8 in \cite{CKRS} then shows that $\cV^*_{20}$ is formally dual to $\cV_{20}$.

The lattice $L$ has discriminant $64$ and hence covolume $8$. Thus $\cV_{20}$ and $\cV^*_{20}$, which are unions of $4$ and $16$ translates of $L$, have particle densities $1/2$ and $2$, respectively. Since $\cV_{20}$ has minimal squared distance $7/2$, formal duality for periodic configurations implies that the first nonzero Bragg peak of $\cV^*_{20}$ occurs at squared length $7/2$. After rescaling $\cV^*_{20}$ to unit particle density, its stealth radius is
\[
2^{-1/20}\sqrt{\frac{7}{2}}.
\]

Since $\cV_{20}$ is a denser sphere packing than any known lattice packing in $\R^{20}$, its formal dual is stealthier than any known lattice at unit particle density. Specifically, the best lattice packing known in $\R^{20}$ is the laminated lattice $\Lambda_{20}$, whose sphere-packing center density is $1/8$ (see Table~I.1 in \cite{SPLAG}). The unit-particle-density rescaling of the dual of a lattice packing in $\R^d$ with center density $\delta$ has stealth radius $2\,\delta^{1/d}$. Thus the rescaling of $\Lambda_{20}^*$ has stealth radius
\[
2\cdot (1/8)^{1/20}=2^{17/20},
\]
and indeed
\[
2^{-1/20} \sqrt{\frac{7}{2}} = 1.80710140\ldots > 1.80250092\ldots = 2^{17/20}.
\]

\section*{Use of artificial intelligence}

The details of the $20$-dimensional stealthy configuration were worked out by the GPT-5.5~Pro model in ChatGPT. We prompted it to search for a stealthy configuration using Conway and Sloane's antipode construction, and it found an example in $\R^{20}$. We subsequently asked it to simplify this configuration and check whether it was formally dual to Vardy's sphere packing. The authors then verified the calculations. In a later exchange, ChatGPT observed that this construction is closely related to Example~3.22 in \cite{LiPottSchuler}. We do not know to what extent this paper influenced the model's output in the original exchange. ChatGPT also provided editorial suggestions and feedback on the manuscript. Except for the AI assistance described above, the authors developed the arguments and wrote the manuscript, and we take responsibility for the correctness of its contents.

\section*{Acknowledgments}

A.~Cohen was partly supported by a Hertz Fellowship and an internship at Microsoft Research New England. H.~Cohn thanks OpenAI for providing a ChatGPT Pro subscription at no cost. M.~Viazovska's work was partly funded by Swiss National Science Foundation grant number 215337 on ``Sphere packing, energy minimization and Fourier interpolation.''

\end{document}